\newtheorem{theorem}{Theorem}[section]
\newaliascnt{conj}{theorem}
\newaliascnt{cor}{theorem}
\newaliascnt{lemma}{theorem}
\newaliascnt{fact}{theorem}
\newaliascnt{claim}{theorem}
\newaliascnt{prop}{theorem}
\newaliascnt{definition}{theorem}
\newtheorem{cor}[cor]{Corollary}
\newtheorem{lemma}[lemma]{Lemma}
\newtheorem{prop}[prop]{Proposition}
\newtheorem{definition}[definition]{Definition}
\theoremstyle{definition}
\newaliascnt{example}{theorem}
\theoremstyle{remark}
\newaliascnt{rmk}{theorem}
\newtheorem{remark}[rmk]{Remark}
\def\sek~{\S{}}
\numberwithin{equation}{section}
\newcommand{\curv}{\operatorname{curv}}
\newcommand{\vol}{\operatorname{\mu}}
\newcommand{\rvol}{\operatorname{vol}}
\newcommand{\dis}{\operatorname{d}}
\newcommand{\BA}{\operatorname{BA}}
\newcommand{\C}{\operatorname{Cone}}
\newcommand{\alex}{\operatorname{Alex}}
\newcommand{\RR}{\mathds{R}}
\newcommand{\NN}{\mathds{N}}
\renewcommand{\SS}{\mathbf{S}}
\begin{document}
\title{Volume estimates for  Alexandrov Spaces with convex boundaries}
\author{Jian Ge}
\address[Ge]{Beijing International Center for Mathematical Research, Peking University, Beijing 100871, P. R. China.}
\email{jge@math.pku.edu.cn}

\subjclass[2000]{Primary: 53C23}
\keywords{Alexandrov space, volume comparison, convex boundary, gradient flow}

\begin{abstract}
In this note, we estimate the upper bound of volume of closed positively or nonnegatively curved Alexandrov space $X$ with strictly convex boundary. We also discuss the equality case. In particular, the Boundary Conjecture holds when the volume upper bound is achieved. Our theorem also can be applied to Riemannian manifolds with non-smooth boundary, which generalizes Heintze and Karcher's classical volume comparison theorem. Our main tool is the gradient flow of semi-concave functions.
\end{abstract}

\maketitle
\section{Introduction}
Let $(M^{n}, g)$ be a smooth Riemannian manifold and $N^{m}\subset M$ be a submanifold. In \cite{HK1978}, Heintze and Karcher developed a general comparison theorem for N-Jacobi fields normal to the submanifold $N$. As an application, they were able to estimate the volume of a tube around the submanifold $N$ in terms of the lower curvature bound of $M$ and upper bound of the norm of mean curvature vectors of $N$ etc. But the technique does not apply if the embedding $N\to M$ is not smooth. For example the boundary of the Alexandrov Lens $\SS^{n-2}*[0,\alpha]$ defined in \cite{GMP2018}, with its intrinsic length distance, is a perfectly round sphere $\SS^{n-1}$, but the embedding is non-smooth. To address this kind of problem, we meet non-smooth spaces inevitably. Alexandrov spaces can be viewed as a generalization of Riemannian manifolds under lower sectional curvature bound, but with some singularities. Since the spaces that we are interested in are non-smooth in nature, we cannot expect to estimate the whole tube around $N$ without addressing the geometry of the singularities of the embedding $N\to M$. Nevertheless, we are able to estimate the one-side neighborhood of a convex hypersurface. We use \cite{BBI2001}, \cite{BGP1992} and \cite{Pet2007} as our references for Alexandrov spaces.

We use $\alex^{n}(\kappa)$ to denote the set of all $n$-dimensional Alexandrov spaces with lower curvature bound: $\curv \ge \kappa$. For $X\in \alex^{n}(\kappa)$ with $\partial X\ne \varnothing$, the boundary $\partial X$ is geodesically convex by the very definition of Alexandrov spaces. We need the following definition from Alexander and Bishop to measure the convexity of $\partial X \subset X$ quantitatively at footpoints. Recall that $p\in \partial X$ is called a \emph{footpoint}, if $p$ is the endpoint of a shortest geodesic from a point $x\in X\setminus \partial X$ to $p$ such that $\dis(x, \partial X)=\dis(x, p)$. Such shortest geodesic is necessarily unique, and the space of directions at $p$ is the spherical cone over $\partial \Sigma_{p}(X)$, cf. \cite{Per1991}. Recall that a \emph{chord} of $\partial X$ is a shortest geodesic in $X$ connecting two points in $\partial X$.

\begin{definition}[\cite{MR2595678}, Def.4.1]\label{def:BA}
Let $X$ be an $n$-dimensional Alexandrov space with non-empty boundary $\partial X$. For $p\in \partial X$, the \it{base angle at} $p$ of a chord $\gamma$ of $\partial X$ is the angle formed by the direction of $\gamma$ and $\partial(\Sigma_p(X))$, where $\Sigma_{p}(X)$ is the space of directions at $p$ of $X$. We call the boundary $\partial X$ has extrinsic curvature $\ge A$  in the base-angle sense at $p$ or $\BA(p, \partial X)\ge A$, if the base angle $\alpha$ at $p$ of chord of length $r$ from $p$ satisfies
$$
\liminf_{r\to 0}\frac{2\alpha}{r}\ge A.
$$
\end{definition}
It can be verified that in the Riemannian setting, this is equivalent to a positive lower bound on the principle curvatures of a smooth hypersurface. Note that for $A=0$, the $A$-convexity is simply convexity, which is automatically satisfied for any Alexandrov space with boundary.

We hereafter call the boundary $\partial X$ is $A$-convex, if the $\BA(p, \partial X)\ge A$ at each foot point $p$, which will be written as $\BA(X, \partial X)\ge A$.

Since we are interested in the volume estimates, the boundary $\partial X$ is assumed to be compact, even though many of our estimates hold locally. We always assume $\partial X$ is equipped with the length metric induced from $X$. It is still a conjecture that $\partial X$ itself is an Alexandrov space with the same lower curvature bound as of that of $X$. See the recent development of this conjecture in \cite{GMP2018}. 

To state our theorem, let's recall for an $n$-dimensional Alexandrov space $X$, the $n$-dimensional Hausdorff measure is denoted by $\vol_{n}(X)$, which will be called the \emph{volume} of $X$. The $n$-dimensional \emph{rough volume} is defined by 
$$
\rvol_{n}(X)=\lim_{\varepsilon\to 0}{\varepsilon}^{n}\beta_{X}(\varepsilon),
$$
where $\beta_{X}(\varepsilon)$ is the maximal cardinality of an $\varepsilon$-separated subset of $X$, that is, a set of points $\{x_{i}\}$ such that $\abs{x_{i}x_{j}}\ge \varepsilon$ for $i\ne j$. It is proved by N. Li in his thesis \cite{Li2010} that 
$$
\vol_{n}(X)=C_{n}\rvol_{n}(X)
$$
for some constant $C_{n}$ depends only on the dimension. We will use Hausdorff measure in this note.

Our first result is the following volume estimates:
\begin{theorem}\label{thm:-1}
Let $X\in \alex^{n}(0)$ be an Alexandrov space with 1-convex boundary $\partial X$. For $0< r \le 1$ we have
$$
\vol_{n}(B(\partial X, r))\le \vol_{n-1}(\partial X)\int_{0}^{r}(1-t)^{n-1}dt,
$$
with equality holds if and only if $B(\partial X, r)$ is isometric to the warped product space $\partial X\times_{1-t}[0, r]$.
Let $Y\in \alex^{n}(1)$ be an Alexandrov space. For $0< r\le \pi/2$, we have
$$\vol_{n}(B(\partial Y, r))\le \vol_{n-1}(\partial Y)\int_{0}^{r}\cos^{n-1}(t)dt$$
with equality holds if and only if  $B(\partial Y, r)$ is isometric to the warped product space $\partial Y\times_{\cos(t)}[0, r]$.
\end{theorem}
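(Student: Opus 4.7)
My plan is to implement the Heintze--Karcher argument with the semi-concave distance function $f(x) := \dis(x, \partial X)$ as the replacement of the normal exponential map. Denote by $\partial^{*}X \subset \partial X$ the set of footpoints; at each $p \in \partial^{*}X$ there is, as recalled in the excerpt, a unique inward unit direction $\xi_p$ perpendicular to $\partial\Sigma_p(X)$. Let $\gamma_p(t)$ be the geodesic issuing in direction $\xi_p$, and define
\[
\Psi : \partial^{*}X \times [0, r] \to X, \qquad \Psi(p, t) = \gamma_p(t),
\]
restricted to $t$ less than the cut time $\tau(p)$ along $\gamma_p$ for $f$. Since every $x\in B(\partial X, r)\setminus \partial X$ admits at least one nearest point on $\partial X$, which is a footpoint by definition, we have $\Psi\bigl(\partial^{*}X\times[0,r]\bigr) \supseteq B(\partial X, r)\setminus\partial X$. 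The area formula therefore yields
\[
\vol_n\bigl(B(\partial X, r)\bigr) \le \int_{\partial^{*}X}\int_{0}^{\min\{r,\tau(p)\}} J_{\Psi}(p, t)\, dt\, d\vol_{n-1}(p),
\]
where $J_\Psi$ is the a.e.-defined Jacobian of $\Psi$.

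The main analytic step is the pointwise upper bound $J_\Psi(p, t) \le \varphi(t)^{n-1}$, where $\varphi$ solves the model Riccati equation $\varphi'' + \kappa\varphi = 0$ with $\varphi(0) = 1$ and $\varphi'(0) = -A$: for $(\kappa, A) = (0,1)$ this gives $\varphi(t) = 1-t$ on $[0,1]$, and for $(\kappa, A) = (1,0)$ it gives $\varphi(t) = \cos(t)$ on $[0, \pi/2]$. To obtain the bound I would use the gradient flow $\Phi_t$ of the concave function $f$: the Alexandrov-geometric second variation/Riccati inequality tells us that the shape operator $S_t$ of the level set $\{f = t\}$ along $\gamma_p$ satisfies, in the distributional sense, $S_t' + S_t^2 + \kappa\,\mathrm{Id} \le 0$, with initial condition $S_0 \ge A\cdot\mathrm{Id}$ supplied by $\BA(p, \partial X) \ge A$. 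Standard Riccati comparison then dominates $\mathrm{tr}\log S_t$, hence $\log J_\Psi(p, t)$, by $(n-1)\log\varphi(t)$. Integrating fibrewise and then over $\partial X$ yields the desired volume inequality.

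For the equality case, equality in the volume bound forces equality in the Riccati comparison along almost every footpoint geodesic, so each such $\gamma_p$ extends to all of $[0, r]$ without cut and the shape operator of $\{f = t\}$ at $\gamma_p(t)$ is exactly $(\varphi'/\varphi)(t)\cdot\mathrm{Id}$. This rigidity identifies $\Psi$ with an isometry from $\partial X \times_\varphi [0, r]$ onto $B(\partial X, r)$. The main obstacle will be rigorously justifying the Jacobian and Riccati calculus in the non-smooth setting: one has to control the cut locus of $f$, bound the measure of the non-footpoint set in $\partial X$ and of points with multiple footpoints in $X\setminus\partial X$, and transfer the smooth Heintze--Karcher Riccati inequality to the Alexandrov context through the contraction properties of the gradient flow of the semi-concave function $f$. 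Once these technical points are established, both the inequality and the rigidity statement follow from the model Riccati comparison.
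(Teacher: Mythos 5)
Your overall strategy is the Heintze--Karcher tube argument transplanted to the Alexandrov setting, and the gap is exactly the step you defer to the end as a ``technical point'': the pointwise Jacobian bound via a Riccati inequality for the shape operator of the level sets. In a general Alexandrov space the objects you invoke --- the Jacobian $J_\Psi$ of the footpoint-exponential map, the shape operator $S_t$ of the level set $\{f=t\}$, and the inequality $S_t'+S_t^2+\kappa\,\mathrm{Id}\le 0$ --- are not defined: the level sets $G(t)$ are merely closed subsets (not rectifiable hypersurfaces carrying a second fundamental form), the hypothesis $\BA(p,\partial X)\ge A$ is a statement about liminfs of base angles of chords and does not supply an operator-valued initial condition $S_0\ge A\cdot\mathrm{Id}$, and there is no second variation formula along a single geodesic yielding a distributional Riccati inequality. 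This is precisely the obstruction the paper flags in its introduction (Heintze--Karcher ``does not apply if the embedding $N\to M$ is not smooth''), so as written the proposal reduces the theorem to an unproved, and in this generality unavailable, piece of smooth Riemannian calculus. The area-formula step is also shaky: what is known to be Lipschitz is the flow from the boundary inward, not an a.e.\ differentiable parametrization $\Psi(p,t)$ admitting a Jacobian.

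The paper's actual route replaces the fibrewise Riccati/Jacobian analysis by a single global metric estimate: the Sharafutdinov flow $\Psi^{T}$ carries $G(0)$ onto $G(T)$ along gradient curves of $\rho$ (and agrees with your map $\Psi(\cdot,T)$ on footpoints), and it is shown to be Lipschitz with constant $1-T$ (resp.\ $\cos T$). This is obtained from Petrunin's contraction lemma for gradient curves of $\lambda$-concave functions applied to $F=f(\rho)$ for a convex increasing $f$ making $F$ uniformly $(-1)$-concave (resp.\ $-\sin(\rho)$-concave), together with a time-comparison lemma showing that footpoint geodesics are the fastest gradient curves, and the $1$-Lipschitz Sharafutdinov retraction to handle non-footpoints. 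Then $\vol_{n-1}(G(T))\le(1-T)^{n-1}\vol_{n-1}(G(0))$ follows from the elementary behaviour of Hausdorff measure under Lipschitz maps, and the coarea formula for the $1$-Lipschitz function $\rho$ finishes the inequality; no Jacobian of a two-variable map is needed. You mention ``the contraction properties of the gradient flow'' only as one of several loose ends, but it is in fact the entire proof; to repair your argument, the Riccati step should be replaced by a proof that $\Psi^T|_{G(0)}$ is $(1-T)$- resp.\ $\cos(T)$-Lipschitz. The equality case has the same defect: rigidity must be extracted from equality in the gradient-flow contraction and in the time-comparison lemma (forcing every boundary point to be a footpoint and every pair of footpoint geodesics to span a flat totally geodesic strip), not from a Riccati ODE.
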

Warped products shown in the rigidity part of \autoref{thm:-1} are in fact  part of the standard cone:
\begin{definition}\label{def:cone}
Let $V$ be a metric space . For $T\in (0, \infty)$, we denote the linear cone $[0,T]\times_t V$ by $\C_0^T(V)$. For $T\in (0, \pi]$, we denote the spherical cone $[0, T]\times_{\sin(t)} V$ by $\C_{1}^{T}(V)$
\end{definition}

Clearly by the construction of cones, the space of directions at the conic point is isometric to $V$. It follows that unless $V\in \alex^{n-1}(1)$, the cones constructed above are not Alexandrov space. Therefore, we make the following remark

\begin{remark}
It is an easy corollary that when equalities hold in \autoref{thm:-1}, the boundaries $\partial X$ and $\partial Y$ are both lie in $\alex^{n-1}(1)$. Namely the boundary conjecture holds in this case. In fact this implies the generlized boundary  conjecture which is proposed in \cite{GL2018a} holds in this case.
\end{remark}

In \cite{MR2595678},  the authors estimated the inradius, i.e. the maximal radius of ball that inscribed in the space, of the space $X$ and $Y$. Namely under the assumptions of \autoref{thm:-1}, Alexander and Bishop  showed that $inrad(X)\le 1$ and $inrad(Y)\le \pi/2$. Therefore we have 
\begin{cor}
Let $X$ and $Y$ be as in \autoref{thm:-1}. Then $\vol_{n}(X)\le \vol_{n}(\C_{0}^{1}(\partial X))$ and $\vol_{n}(Y)\le \vol_{n}(\C_{1}^{\pi/2}(\partial Y))$
\end{cor}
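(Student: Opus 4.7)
The plan is essentially a one-line reduction to \autoref{thm:-1}, combined with the inradius bounds of Alexander--Bishop that are quoted just before the statement. The input data are (i) $\operatorname{inrad}(X)\le 1$ and $\operatorname{inrad}(Y)\le \pi/2$, and (ii) the volume inequalities of \autoref{thm:-1} at the maximal admissible radius.

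First, by the definition of the inradius, the bound $\operatorname{inrad}(X)\le 1$ says precisely that every point of $X$ lies within distance $1$ of $\partial X$, so $X=B(\partial X,1)$. Similarly $Y=B(\partial Y,\pi/2)$. Applying \autoref{thm:-1} with $r=1$ in the nonnegatively curved case and $r=\pi/2$ in the positively curved case gives
$$
\vol_n(X)\le \vol_{n-1}(\partial X)\int_0^1(1-t)^{n-1}\,dt,\qquad \vol_n(Y)\le \vol_{n-1}(\partial Y)\int_0^{\pi/2}\cos^{n-1}(t)\,dt.
$$

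Next I would compute the right-hand sides as cone volumes. By \autoref{def:cone}, the linear cone $\C_0^1(\partial X)=[0,t]\times_t \partial X$ is a warped product whose $n$-dimensional Hausdorff measure is
$$
\vol_n\bigl(\C_0^1(\partial X)\bigr)=\vol_{n-1}(\partial X)\int_0^1 t^{n-1}\,dt,
$$
and similarly $\vol_n(\C_1^{\pi/2}(\partial Y))=\vol_{n-1}(\partial Y)\int_0^{\pi/2}\sin^{n-1}(t)\,dt$. A change of variables $t\mapsto 1-t$ in the linear case and $t\mapsto \pi/2-t$ in the spherical case shows that the integrals appearing in \autoref{thm:-1} agree exactly with the cone-volume integrals, which yields the two desired inequalities.

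The main thing to verify carefully is the warped-product volume formula for the (not necessarily Alexandrov) cones of \autoref{def:cone}: Hausdorff $n$-measure on $[0,T]\times_{f(t)} V$ with $V$ an Alexandrov space disintegrates as $f(t)^{n-1}\,dt\otimes d\vol_{n-1}|_V$, which is standard and needs no genuine obstacle. The only substantive ingredient of the corollary is \autoref{thm:-1} itself, together with the inradius bound cited from \cite{MR2595678}; the corollary is then purely a matching of integrals. There is accordingly no real ``hard part'' here beyond being explicit about the cone volume computation.
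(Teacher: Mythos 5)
Your proposal is correct and takes essentially the same route as the paper: the corollary is obtained by combining the Alexander--Bishop inradius bounds ($a\le 1$, resp.\ $a\le \pi/2$) with the volume inequality of \autoref{thm:-1} at the maximal radius, and identifying the resulting right-hand sides with the cone volumes of \autoref{def:cone}. The paper phrases this via the coarea formula ($\vol_n(X)=\int_0^a A(t)\,dt$ with $A(t)\le A^*(t)$ and $\vol_n(\C_0^1(\partial X))=\int_0^1 A^*(t)\,dt$), which is the same integral-matching you carry out by the change of variables $t\mapsto 1-t$.
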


\begin{remark}
The classical Bishop-Gromov volume comparison theorem bounds the volume of $r$-ball in Riemannian manifold with lower Ricci curvature bound by the one in the simply connected space form. Such comparison result has been generalized to Alexandrov setting, cf \cite{BGP1992} as well as \cite{LR2012}.  Other metric properties of $X$ and $Y$ are studied in \cite{GL2018a}.
\end{remark}

In \cite{Gro1991}, Gromov interprets the meaning of sign of various curvatures using the evolution of equal-distance hyper-surfaces. Roughly speaking, the sign of the curvature is reflected by the convexity of the equal-distance evolution a hypersurface, such convexity is controlled by the Riccati equation or Hessian/Laplacian comparison in the smooth setting. In this note, we estimate quantitatively the convexity of the level let of $\rho$, which is reflected by the concavity of $\rho$, where
$$
\rho =\dis_{\partial X}(\cdot)=\dis(\partial X, \cdot).
$$
\begin{theorem}[Hessian Comparison]\label{prop:RCT}
Let $X$ and $Y$ be as in \autoref{thm:-1}. Let $G(t)=\rho^{-1}(t)$ be the level set of $\rho$ and $\Omega^{t}:=\rho^{-1}([t, a])$ be the level set and super level set of $\rho$ in $X$ or $Y$, then we have the base-angle lower bound:
\begin{enumerate}
\item $\BA(\Omega^{t}, G(t))\ge \frac{1}{1-t}$ in $X$,
\item $\BA(\Omega^{t}, G(t))\ge \tan(t)$ in $Y$.
\end{enumerate}
\end{theorem}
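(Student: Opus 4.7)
The plan is to derive the claimed base-angle bound from a sharp upper bound on $\rho = \dis(\partial X, \cdot)$ along chords of the level sets $G(t)$, the latter being the Alexandrov analog of integrating the classical Riccati equation $\phi' + \phi^2 + K = 0$ along normal geodesics. Concretely, for $p \in G(t)$ and a chord $\gamma : [0,r] \to \Omega^t$ starting at $p$ with base angle $\alpha$, I want to establish
\[
\rho(\gamma(s)) \le t + s\sin\alpha - \tfrac{s^2}{2}\phi(t)\cos^2\alpha + o(s^2),
\]
with $\phi(t) = 1/(1-t)$ for $X$ (matching $\phi(0) = 1$ from $\BA(\partial X) \ge 1$) and $\phi(t) = \tan t$ for $Y$ (matching $\phi(0) = 0$ from mere convexity). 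This is the expected Hessian upper bound; it is sharp on the warped-product models of \autoref{def:cone} by direct computation.

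Granting that bound, the base-angle estimate is immediate: since $\gamma$ is a chord of $G(t)$ in $\Omega^t$, we have $\rho(\gamma(r)) = t$ and $\rho \ge t$ along $\gamma$. Evaluating the upper bound at $s = r$ gives $0 \le r\sin\alpha - \tfrac{r^2}{2}\phi(t)\cos^2\alpha + o(r^2)$. As $r \to 0$, $\alpha \to 0$, so $\sin\alpha \sim \alpha$ and $\cos\alpha \to 1$; dividing by $r^2/2$ yields $\liminf_{r \to 0} 2\alpha/r \ge \phi(t)$, which is precisely the claim.

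The real work is in establishing the Hessian upper bound, and this is where I expect the main obstacle. The approach is to run the gradient flow $\Phi_s$ of $\rho$ (well-defined by semi-concavity of $\rho$, which follows from Toponogov plus boundary convexity) outward from $\partial X$ and propagate the base-angle bound inductively. The inductive step uses Toponogov's hinge comparison applied to the triangle with vertices $p \in G(t)$, the footpoint $p' \in \partial X$ (at distance $t$ from $p$), and $\gamma(s)$ (at distance $s$ from $p$): the angle at $p$ is controlled by $\alpha$ and $\pi/2$, since the footpoint ray is along $-\nabla\rho$; so $\dis(\gamma(s), p')$ can be compared to the model, and then the $\BA(\partial X)$ hypothesis at $p'$ translates this into the desired estimate on $\rho(\gamma(s)) = \dis(\gamma(s), \partial X)$. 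Iterating infinitesimally (or differentiating in $t$) gives a differential inequality $\phi'(t) \ge \phi(t)^2 + \kappa$ with the stated initial conditions, which integrates to the sharp $\phi(t) = 1/(1-t)$ for $\kappa = 0$ and $\phi(t) = \tan t$ for $\kappa = 1$. The cleanest presentation likely follows the chord-comparison style of Alexander--Bishop in \cite{MR2595678}, where the non-smooth Riccati propagation is replaced by an explicit two-step hinge argument at each scale.
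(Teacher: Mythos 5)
Your overall architecture is the same as the paper's: reduce the base-angle bound to a second-order upper bound on $\rho$ at a footpoint $p\in G(t)$, then let the chord length go to zero. Your target inequality $\rho(\gamma(s))\le t+s\sin\alpha-\tfrac{s^2}{2}\phi(t)\cos^2\alpha+o(s^2)$ is in fact correct, and the elementary step from it to $\liminf 2\alpha/r\ge\phi(t)$ is fine. The problem is that you have placed all the real content into the unproved ``Hessian upper bound'' and then only sketched a proof of it by a from-scratch Riccati propagation. That propagation is precisely the hard theorem you should be \emph{citing}, not re-deriving: \autoref{prop:AB} (Alexander--Bishop for $\kappa=0$ with $1$-convex boundary, Petrunin for $\kappa=1$) already states that $-(1-\rho)^2/2$ is $-1$-concave, resp.\ $\sin(\rho)$ is $-\sin(\rho)$-concave, along (quasi-)geodesics. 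Unwinding that concavity along the chord, using $h'(0)=\sin\alpha$ from the suspension structure $\Sigma_p(X)=\SS(\partial\Sigma_p(\Omega^t))$ at a footpoint, gives exactly your inequality with $\phi(t)=1/(1-t)$, resp.\ $\tan t$ --- no induction needed. This is essentially what the paper does, except that it runs a quasi-geodesic in a direction $\xi\in\partial\Sigma_p(\Omega^t)$ (so $h'(0)=0$) and transfers the estimate to the chord endpoint via the $1$-Lipschitz property of $\rho$ and a triangle comparison.

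As written, your propagation sketch also contains a step that would fail. Toponogov's hinge comparison at $p$ with sides $pp'$ and $p\gamma(s)$ gives a \emph{lower} bound on $\dis(\gamma(s),p')$, whereas an upper bound on $\rho(\gamma(s))=\dis(\gamma(s),\partial X)$ requires exhibiting a \emph{nearby} boundary point; a lower bound on the distance to the single footpoint $p'$ yields nothing in the needed direction. Turning the $\BA$ hypothesis at $p'$ into such an upper bound is exactly the nontrivial content of Alexander--Bishop's Theorem 1.8, and ``iterating infinitesimally'' does not discharge it (note also that for $Y$ there is no extrinsic convexity hypothesis at all, so the $\kappa=1$ case cannot be bootstrapped from boundary data --- it comes from Petrunin's interior concavity theorem). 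Replace the inductive hinge argument by a direct appeal to \autoref{prop:AB} and your proof closes.
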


In Section 1, we recall the definition of gradient flow of semi-concave functions. Several important estimates of gradient flows will be proved in this section, which play an essential role in our proof of volume bounds. Section 2 is devoted to the proof of volume upper bounds. The Hessian comparison is proved in Section 3.

Acknowledgment: We would like to thank Professor Takashi Shioya for his interest in our work.

\section{Gradient Curve for semi-concave functions}
Our main reference for semiconcave functions is \cite{Pet2007}. First, let's recall the definition of gradient curves of $\rho$, which is semi-concave if $X\in\alex^{n}(\kappa)$. Let $f:\RR\to \RR$ be a smooth function, with 
\begin{equation}\label{eq:f}
f'>0,\ \  f''>0.
\end{equation}
Let $X\in \alex^{n}(\kappa)$, with $\kappa =1$ or $0$. Let $a> 0$ be the positive number such that $\rho(X)=[0, a]$. Denote the sub-level  and super-level set  $\rho$ by 
$$\Omega_{t}=\rho^{-1}[0, t], \Omega^{t}=\rho^{-1}[t, a], $$ denote the level set by 
$$G(t)=\partial \Omega_{t}=\rho^{-1}(t).$$ 
We set
$$
F(x):=f(\rho(x)).
$$
For any $p\in \partial X=G(0)$, denote by $\alpha_{p}$ the gradient curve of $F$ with $\alpha_{p}(0)=p$, that is:
$$
\alpha_{p}^{+}(t)=\nabla_{\alpha_{p}(t)}F,
$$
where $\alpha^+(t)\in T_{\alpha(t)}X$ is the right tangent vector to $\alpha$ at time $t$. For any $\bar{w}\in G(b)$ with $b<a$, let $w\in \partial X$ be a foot point of $\bar{w}$. Clearly,  the gradient curve $\alpha_{w}$ coincide with the geodesic from $w$ to $\bar{w}$, possibly with a different parametrization. 

For a fixed $b<a$, and any $p\in\partial X$, we let $T_{p}=T_{p}(b)\in \RR$ defined by $\alpha_{p}(T_{p})\in G(b)$. Since $f$ is monotonic increasing, such $T_{p}$ exists and necessarily unique. We have the following lemma characterizes geodesics as the fastest gradient curves start from $G(0)$ to $G(b)$.
\begin{lemma}\label{lem:time}
Let $\bar w\in G(b)$ and $w\in G(0)$ be a footpoint of $\bar w$. Then for any $p\in G(0)$, we have
$$
T_{w}\le T_{p},
$$
with equality holds if and only if $\alpha_{p}$ coincides with the shortest geodesic segment from $\alpha_{p}(T_{p})$ to $G(0)$.
\end{lemma}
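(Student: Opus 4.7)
\textbf{Proof proposal for \autoref{lem:time}.} The plan is to reduce the statement to a standard ODE comparison along the radial coordinate $u_p(t):=\rho(\alpha_p(t))$. First I would compute the right derivative of $u_p$. Because $f'>0$, the chain rule for gradients of semi-concave functions gives $\nabla F = f'(\rho)\,\nabla\rho$ at every interior point, so
$$
\frac{d^+}{dt}u_p(t)=d_{\alpha_p(t)}\rho\bigl(\nabla F(\alpha_p(t))\bigr)=f'(u_p(t))\,|\nabla\rho|^2(\alpha_p(t)),
$$
using the standard identity $d_xg(\nabla_x g)=|\nabla_x g|^2$ for semi-concave $g$. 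Since $\rho$ is $1$-Lipschitz we have $|\nabla\rho|\le 1$ on $X\setminus\partial X$, and hence
$$
u_p'(t)\le f'(u_p(t)),\qquad u_p(0)=0.
$$

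Next I would specialize to the footpoint $w$ of $\bar w$. By assumption, the unique shortest segment $\gamma$ from $w$ to $\bar w$ parametrized by arclength satisfies $\rho(\gamma(s))=s$, so $\gamma$ is an integral curve of $\nabla\rho$ with $|\nabla\rho|\equiv 1$ along it. Consequently $\alpha_w$ traces $\gamma$ (up to reparametrization) and the same computation yields the equality
$$
u_w'(t)=f'(u_w(t)),\qquad u_w(0)=0.
$$
Standard ODE comparison then gives $u_p(t)\le u_w(t)$ for every $t$ in the common domain, with equality at some $t_0>0$ if and only if $|\nabla\rho|(\alpha_p(t))=1$ for a.e.\ $t\in[0,t_0]$. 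Setting $t_0=T_w$ yields $u_p(T_w)\le u_w(T_w)=b=u_p(T_p)$, and since $u_p$ is monotone non-decreasing the inequality $T_w\le T_p$ follows.

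For the equality case, $T_p=T_w$ forces $u_p\equiv u_w$ on $[0,T_w]$, hence $|\nabla\rho|(\alpha_p(t))\equiv 1$. At any point where $|\nabla\rho|=1$ the footpoint is unique and is attained along the unit direction $\nabla\rho$; combined with the continuity of $t\mapsto\alpha_p(t)$ and the fact that $\alpha_p(0)=p\in\partial X$, this forces $p$ to be the unique footpoint of every $\alpha_p(t)$, and the trajectory of $\alpha_p$ to coincide with the shortest geodesic from $\alpha_p(T_p)$ back to $\partial X$, as claimed.

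The main technical subtlety, and the step I would verify most carefully, is the chain-rule identity $\nabla(f\circ\rho)=f'(\rho)\nabla\rho$ and the formula $\frac{d^+}{dt}u_p=f'(u_p)|\nabla\rho|^2$ in the non-smooth Alexandrov setting; once these (standard in Petrunin's formalism, \cite{Pet2007}) are in hand, the scalar ODE comparison and the characterization $|\nabla\rho|=1 \Leftrightarrow$ unique perpendicular footpoint finish the argument cleanly.
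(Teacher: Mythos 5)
Your proposal is correct and follows essentially the same route as the paper: both rest on the identity $\frac{d}{dt}\rho(\alpha_p(t))=f'(\rho)\,|\nabla\rho|^2$ along the $F$-gradient curve, the bound $|\nabla\rho|\le 1$ versus $|\nabla\rho|\equiv 1$ along the curve through the footpoint, and the characterization of equality by $|\nabla\rho|\equiv 1$. The only cosmetic difference is that the paper concludes by a change of variables in the time integral ($T_p=\int_0^b \frac{dh_p}{f'(h_p)|\nabla\rho|^2}$) while you run a scalar ODE comparison for $u_p$ versus $u_w$; these are equivalent.
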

\begin{proof}
For any $p\in G(0)$, define $h_{p}(t)=\rho(\alpha_{p}(t))$. Clearly $h_{p}(0)=0$. One calculate the derivative of $h_{p}$ as
$$
\frac{d h_{p}}{dt}=f'(h_{p})(|\nabla\rho|^{2})|_{\alpha_{p}(t)}.
$$
Since $\rho$ is semi-concave, $|\nabla \rho|$ is well defined. Clearly $|\nabla\rho|\le 1$ everywhere and $(|\nabla\rho|)|_{\alpha_{w}(t)}\equiv 1$, for $t\in [0, T_{w}]$. Therefore, we have
$$
T_{w}=\int_{0}^{b}\frac{dh_{w}}{f'(h_{w})}\le \int_{0}^{b}\frac{dh_{p}}{f'(h_{p})|\nabla \rho|^{2}}=T_{p}.
$$
with equality if and only if $(|\nabla\rho|)|_{\alpha_{p}(t)} \equiv 1$, i.e. the conclusion of lemma holds.
\end{proof}

Next, we study the concavity of $F=f(\rho)$. Essentially, we only interested in the concavity of $\rho$, which reflects the geometry of the space $X$. However, due to the linearity of $\rho$ along geodesic segments that realize the distance between point $x$ and the boundary $\partial X$, we usually precompose a convex increasing function $f$ to get the estimate of concavity of $F$ for all directions. Although any function $f$ satisfies \eqref{eq:f} will work for our estimates, however, the following choice of $f$ is rather typical:
\begin{prop}\label{prop:AB}
Let $X$ be a compact $n$-dimensional Alexandrov space with nonempty boundary $\partial X$. 
\begin{enumerate}
\item (Theorem 1.8, \cite{MR2595678}) If $\curv(x)\ge 0$ and $\partial X$ is $1$-convex in the Base-Angle sense. Then the function $f(\rho(x)):=-(1-\rho(x))^{2}/2$ is $-1$-concave.
\item (Theorem 3.3.1, \cite{Pet2007}) If $\curv(X)\ge 1$, then $f(\rho(x)):=\sin(\rho(x))$ is $-f(\rho)$-concave.
\end{enumerate}
\end{prop}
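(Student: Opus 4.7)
My plan is to prove both statements as upper bounds on the second (upper-support) derivative of $F = f(\rho)$ along an arbitrary geodesic $\gamma$, checked pointwise by comparing $F$ with a carefully chosen upper-support function whose Hessian is already controlled.

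For Part (2) the approach is a one-line barrier argument. Fix $y = \gamma(t_{0})$ with $\rho(y) > 0$ and let $q \in \partial X$ be any foot-point of $y$, so $\rho(y) = d(q, y) \le \pi/2$. The triangle inequality gives $\rho(\cdot) \le d(q, \cdot)$ everywhere, with equality at $y$; composing with $\sin$ (increasing on $[0, \pi/2]$) yields $\sin\rho \le \sin d_{q}$, with equality at $y$. Hence the upper-support second derivative of $\sin\rho$ at $y$ is dominated by that of $\sin d_{q}$, and the latter is bounded above by $-\sin d_{q}(y) = -\sin\rho(y)$ by the classical Hessian comparison for distance from a point in $\alex^{n}(1)$. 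That comparison reduces to explicit $2$-dimensional Toponogov comparison in $\SS^{2}$, where $(\sin d_{q})'' + \sin d_{q} \le 0$ holds identically.

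For Part (1) the same barrier strategy is more delicate: the natural ``virtual apex'' at $\rho = 1$ in the warped-product model $\partial X \times_{1-t}[0, 1]$ may not correspond to any actual point of $X$, and a naive gluing of $\C_{0}^{1}(\partial X)$ to $X$ along $\partial X$ places the apex on the wrong side of $\partial X$. Instead I would follow the argument of Alexander-Bishop: given $\gamma$ and $t_{0}$, pick a foot-point $p \in \partial X$ of $\gamma(t_{0})$ and apply a $2$-dimensional hinge comparison to the triangle with vertices $\{p, \gamma(t_{0} - \varepsilon), \gamma(t_{0} + \varepsilon)\}$, using the warped-product as the model space. The curvature hypothesis $\curv \ge 0$ controls the triangle away from $\partial X$ via Toponogov, while the base-angle bound $\BA(\partial X) \ge 1$ at $p$ supplies the boundary initial condition that pins down how $\rho$ separates from $0$ as $\gamma$ moves away from $p$. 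Passing $\varepsilon \to 0$ and substituting $f(\rho) = -(1-\rho)^{2}/2$ into the resulting inequality yields the claimed $-1$-concavity.

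The main obstacle, shared by both parts, is the usual one of performing Hessian estimates in a non-smooth setting: the inequality must be established in the barrier / upper-support sense, since $\rho$ is only semi-concave. In the smooth Riemannian case Part (1) reduces to a short Riccati computation: solve $S' + S^{2} + R_{\gamma} = 0$ with initial data $S(0) \ge I$ (from $1$-convexity) and $R_{\gamma} \ge 0$ (from $\sectional \ge 0$) to obtain $S(t) \le (1-t)^{-1} I$, then combine with $f'(\rho) = 1-\rho$ and $f''(\rho) = -1$ in the Hessian chain rule. In the Alexandrov setting the Riccati step is replaced by Toponogov comparison and the initial-data condition by the base-angle bound, and fusing these into a single quadrilateral-type estimate is the technical heart of the argument.
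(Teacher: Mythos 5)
The paper offers no proof of this proposition; both items are quoted verbatim from the literature (Alexander--Bishop, Thm.~1.8, and Petrunin, Thm.~3.3.1), so the real question is whether your sketch would actually establish them. Part (1) is essentially a deferral to Alexander--Bishop's own hinge-comparison argument, which is fine as far as it goes. Part (2), however, contains a genuine error: the barrier you propose does not satisfy the differential inequality you claim for it. It is \emph{not} true that $(\sin d_{q})'' + \sin d_{q}\le 0$ along geodesics in $\SS^{2}$, nor in $\alex^{n}(1)$. Take $X$ the closed upper hemisphere of $\SS^{2}$, $q$ on the equator, $y$ the north pole, and $\gamma$ the great circle through $y$ lying in the plane orthogonal to $q$; then $d_{q}\equiv \pi/2$ along $\gamma$, so $(\sin d_{q})''=0$ while $-\sin d_{q}=-1$. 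The correct point-distance comparison in $\curv\ge 1$ controls $1-\cos d_{q}$ (one has $(1-\cos d_{q})''\le \cos d_{q}$ in the support sense), which is a much weaker, in fact positive, concavity modulus for $d_{q}<\pi/2$. The strong concavity of $\sin\rho$ with $\rho=\dis(\partial X,\cdot)$ comes precisely from $\partial X$ being a convex \emph{hypersurface} (Riccati initial condition $S(0)\ge 0$ rather than $S(0)=+\infty\cdot$``$\cot$''), and this information is destroyed the moment you replace $\rho$ by the single-point upper support $d_{q}$: the barrier touches $\sin\rho$ from above at $y$ but its second derivative there gives no useful bound, as the hemisphere example shows ($\sin\rho(\gamma(t))=\cos t$ genuinely satisfies the inequality, while its barrier $\sin d_{q}\equiv 1$ does not).

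To repair Part (2) you need a mechanism that sees the boundary globally rather than through one foot point. The standard route (and the one behind Petrunin's Theorem 3.3.1) is Perelman's doubling theorem: the double $\tilde X=X\cup_{\partial X}X$ lies in $\alex^{n}(1)$, $\rho(x)=\frac12\dis_{\tilde X}(x,x')$ for the mirror point $x'$, and the concavity of $\sin\rho$ is then extracted from comparison in $\tilde X$; alternatively one runs the Riccati/Toponogov comparison for the equidistants of $\partial X$ directly, with the convexity of $\partial X$ (automatic for Alexandrov boundaries) as the initial condition --- which is exactly the structure of your Part (1) sketch, and is why that part, unlike Part (2), is at least aimed in the right direction.
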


For a continuous function $\phi:[0, a]\to \RR^{+}$, we call $F$ is a $\phi(\rho)$-concave function or the \emph{modulus of concavity} function of $F$ is $\phi$, if for any $t\in [0, a)$, $F$ is $\phi(t)$-concave in the interior of $\Omega_{t}$. One can easily see $\phi$ has to be a \emph{nonincreasing} function for $t\in [0, a)$. This is a very special type of concavity, since the concavity depends only on the level-sets of $\rho$, which makes the estimates on the gradient curves of different parametrizations possible. Using our language, \autoref{prop:AB} implies the modulus of concavity for $F$ is $\phi(t)=-1$ for $\kappa =0$ and $\phi(t)=-\sin(t)$ for $\kappa=1$.

A closed related concept, called the Sharafutdinov flow $\Psi^t: X\to X$ is defined by Perelman in \cite{Per1991}, by generalizing Sharafutdinov's original construction for Riemannian manifolds. It can be constructed as follows: Let $p\in \partial X$ then the flow curve $\tilde\alpha(t)=\Psi^t(p)$ satisfies the differential equality
$$
\tilde\alpha^+(t)=\frac{\nabla_{\tilde\alpha(t)}\rho}{|\nabla_{\tilde\alpha(t)}\rho|^2},
$$
with initial condition $\tilde\alpha^{+}(0)=p$, where $\tilde\alpha^+(t)\in T_{\tilde\alpha(t)}X$ is the right tangent vector to $\tilde\alpha$ at time $t$. Clearly flow curves of $\Psi$ are just gradient curves of $\rho$ with a different parametrization, such that it flows level set to level set, i.e. $$\Psi^t(G(s))=G(s+t),$$ for $t+s\le a$. With the help of $\Psi$, Perelman then defined the so called Sharafutdinov retraction which retracts the whole $X$ onto its soul. We summarize the properties of the retraction map: $sh$ as follows
\begin{prop}[\cite{Per1991}]\label{prop:PerlSh}
Let $X$ be as above. Then for any $t\le a$, there is a unique surjective map, $sh^t$, called Sharafutdinov retraction
$$sh^t: X\to \Omega_{t},$$ 
such that $sh^t|_{\Omega_{t}}=id$ and it is a short map, i.e. $sh^t$ is distance non-increasing. 
\end{prop}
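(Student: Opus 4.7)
My plan is to construct $sh^t$ directly from the Sharafutdinov flow $\Psi$ just introduced, and then verify the three properties in turn. For $x\in X$ I set $sh^t(x):=x$ when $\rho(x)\geq t$ and $sh^t(x):=\Psi^{t-\rho(x)}(x)$ when $\rho(x)<t$. Because $\Psi$ carries $G(s)$ into $G(s+\tau)$ after flow time $\tau$, the second branch lands on $G(t)\subset\Omega_t$ while the first keeps the point in $\Omega_t$, so $sh^t$ is well-defined as a map $X\to\Omega_t$, restricts to the identity on $\Omega_t$, and is therefore surjective. Uniqueness is the uniqueness of the $\Psi$-trajectory through an interior point, a standard feature of the gradient-flow theory for semi-concave functions recalled above.

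The substantive content is the short-map claim. I fix $x_0,y_0\in X$, write $x_s:=sh^s(x_0)$, $y_s:=sh^s(y_0)$, and aim to show that $s\mapsto|x_sy_s|$ is non-increasing for $s\in[\max(\rho(x_0),\rho(y_0)),t]$, since on the smaller range one or both of the points are fixed and the estimate specializes. For such $s$ both $x_s$ and $y_s$ lie on $G(s)$. Take a minimal geodesic $\gamma$ from $x_s$ to $y_s$ and let $\xi_x,\xi_y$ be its initial directions at the two endpoints. By \autoref{prop:AB} the function $F=f(\rho)$ is $\phi$-concave with $\phi\leq 0$, so $F\circ\gamma$ is concave on $[0,|x_sy_s|]$; since $F(x_s)=F(y_s)=f(s)$, this forces $F\circ\gamma\geq f(s)$ along $\gamma$, and as $f$ is strictly increasing we conclude $\rho\geq s$ everywhere on $\gamma$. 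Hence $\rho|_\gamma$ attains its minimum at the endpoints, so $d_{x_s}\rho(\xi_x)\geq 0$ and $d_{y_s}\rho(\xi_y)\geq 0$; by the defining property of the gradient this is equivalent to $\angle(\nabla_{x_s}\rho,\xi_x),\angle(\nabla_{y_s}\rho,\xi_y)\leq\pi/2$. Plugging the Sharafutdinov velocity $\tilde\alpha^+=\nabla\rho/|\nabla\rho|^2$ into the first-variation formula for Alexandrov distances then yields
$$
\frac{d^+}{ds}|x_sy_s|\leq -\frac{\cos\angle(\nabla_{x_s}\rho,\xi_x)}{|\nabla_{x_s}\rho|}-\frac{\cos\angle(\nabla_{y_s}\rho,\xi_y)}{|\nabla_{y_s}\rho|}\leq 0,
$$
and integrating over $s$ gives $|sh^t(x_0),sh^t(y_0)|\leq|x_0y_0|$.

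The main obstacle is justifying the first-variation computation at possibly singular points of $X$ where minimal geodesics between $x_s$ and $y_s$ may not be unique, and converting $d_x\rho(\xi)\geq 0$ into the acute-angle bound in the genuine Alexandrov sense. I would handle both within the standard framework of \cite{Pet2007}: work with right derivatives and $\limsup$ over approaching minimizing geodesics, invoke angle monotonicity, and use the characterization $|\nabla\rho|^2=d_x\rho(\nabla\rho/|\nabla\rho|)$ together with the supporting inequality $d_x\rho(\xi)\leq|\nabla\rho|\cos\angle(\nabla\rho,\xi)$ to pass from the differential condition to the cosine sign needed in the first-variation estimate. Once these technicalities are in place, the monotonicity of $|x_sy_s|$ gives the short-map property at $s=t$, completing the proof.
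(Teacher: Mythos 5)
The paper offers no proof of this proposition at all --- it is imported verbatim as a citation to Perelman, and is used later (in the proof of \autoref{thm:gcurve:0}) only through the short-map property. So there is nothing internal to compare against; what you have written is a self-contained reconstruction, and it is essentially the standard one. Two remarks. First, you have silently corrected the statement: your $sh^t$ retracts onto the super-level set $\Omega^t=\rho^{-1}[t,a]$ (flowing points up to level $t$ and fixing everything above), whereas the proposition literally says $\Omega_t=\rho^{-1}[0,t]$. Your reading is the right one --- it is the only one compatible with how the paper uses $sh^T(\tilde p)=\bar p$ with $\rho(\tilde p)\le T$ in the proof of \autoref{thm:gcurve:0} --- but you should say explicitly that you are interpreting $\Omega_t$ as $\Omega^t$. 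Second, your argument is sound but its entire weight rests on the first-variation inequality $\frac{d^+}{ds}|x_sy_s|\le-\langle v_x,\xi_x\rangle-\langle v_y,\xi_y\rangle$ for gradient curves with both endpoints moving; this is exactly the engine behind \autoref{lem:pet}, so your proof is not more elementary than citing Petrunin's contraction machinery, it is a re-derivation of it in the special case at hand. What your route does buy is a transparent geometric reason for the contraction: concavity of $F=f(\rho)$ makes the super-level sets convex, hence $\rho\ge s$ along any minimal geodesic joining two points of $G(s)$, hence both geodesic directions make acute angles with the gradients --- this is a cleaner picture than the abstract $\lambda$-concavity estimate. Minor points you should still nail down: $|\nabla\rho|>0$ on $G(s)$ for $s<a$ (which follows from concavity of $F$, since $|\nabla F|=0$ forces a global maximum of $\rho$), and the fact that the asserted \emph{uniqueness} of $sh^t$ cannot mean uniqueness among all surjective short retractions (which is false in general) but uniqueness of the flow-defined map; your one-line disposal of uniqueness conflates the two.
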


Given $p, q\in G(0)$, we want to estimate $\dis(\Psi^{t}(p), \Psi^{t}(q))$. Since the Sharafutdinov flow curve coincide with the gradient curve of $\rho$ as well as the ones of $f(\rho)$, who's concavity is well understood by \autoref{prop:AB}, we estimates the distance between $\alpha_{p}$ and $\alpha_{q}$ instead. Such an estimate is rather easy for $\kappa =0$ case, since by \autoref{prop:AB}, $f(\rho)$ is $-1$-concave. i.e. the concavity is uniform on all $X$. The situation is a little complicated for $\kappa=1$. But the special type of concavity of $F$, i.e. $\phi$-concavity, makes the estimates possible for non-uniform concavity and non-uniform time gradient flows. We first recall the following estimates for gradient curves of $\lambda$-concave function, for uniform $\lambda\in \RR$ and uniform time $t$:

\begin{lemma}[Lemma 2.1.4 \cite{Pet2007}]\label{lem:pet}
Let $A$ be an Alexandrov space. Let $F:A\to \RR$ be a $\lambda$-concave function and $\alpha, \beta: [0, \infty)\to A$ be two $F$-gradient curves with $\alpha(0)=p, \beta(0)=q$. Then for $t>0$
$$
\dis(\alpha(t), \beta(t))\le e^{\lambda t}\dis(p, q).
$$
\end{lemma}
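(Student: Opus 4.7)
The plan is to show that $\ell(t) := \dis(\alpha(t),\beta(t))$ satisfies the one-sided differential inequality $\ell^{+}(t) \le \lambda\,\ell(t)$, where $\ell^{+}$ denotes the right upper derivative, and then integrate via a Gronwall argument to obtain the claimed exponential bound.

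Fix $t$ with $\ell(t) > 0$ and choose a shortest geodesic $\gamma:[0,\ell(t)]\to A$ from $\alpha(t)$ to $\beta(t)$; write $\xi\in\Sigma_{\alpha(t)}$ and $\eta\in\Sigma_{\beta(t)}$ for its two endpoint directions. The first variation formula for distance in Alexandrov spaces, applied to the variation $s\mapsto(\alpha(t+s),\beta(t+s))$ with right-tangents $\alpha^{+}(t) = \nabla_{\alpha(t)}F$ and $\beta^{+}(t) = \nabla_{\beta(t)}F$, gives
$$
\ell^{+}(t) \;\le\; -\langle \nabla_{\alpha(t)} F,\,\xi\rangle - \langle \nabla_{\beta(t)} F,\,\eta\rangle.
$$

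To bound the right-hand side by $\lambda\ell(t)$, I exploit $\lambda$-concavity of $F$ along $\gamma$. By definition, $s\mapsto F(\gamma(s)) - \tfrac{\lambda}{2}s^{2}$ is concave on $[0,\ell(t)]$, so it lies below its right tangent at $s=0$:
$$
F(\beta(t)) - F(\alpha(t)) - \tfrac{\lambda}{2}\ell(t)^{2} \;\le\; \ell(t)\cdot d_{\alpha(t)}F(\xi).
$$
The defining property of the gradient, $d_{x}F(\zeta)\le\langle\nabla_{x}F,\zeta\rangle$ for every $\zeta\in\Sigma_{x}$, upgrades this to
$$
\langle\nabla_{\alpha(t)}F,\xi\rangle \;\ge\; \frac{F(\beta(t))-F(\alpha(t))}{\ell(t)} \;-\; \frac{\lambda\,\ell(t)}{2}.
$$
The symmetric inequality at $\beta(t)$, obtained by running $\gamma$ in reverse, gives the analogous lower bound for $\langle\nabla_{\beta(t)}F,\eta\rangle$. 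Adding these two estimates, the $F$-values cancel, and combining with the first-variation bound yields the desired $\ell^{+}(t) \le \lambda\,\ell(t)$.

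Since $|\nabla F|$ is bounded on the compact set swept out by the two flows over any finite time interval, $\ell$ is Lipschitz, and the pointwise estimate $\ell^{+}\le\lambda\ell$ integrates via Gronwall to $\ell(t) \le e^{\lambda t}\ell(0)$. The zero-set of $\ell$ is harmless: once the two gradient curves meet, uniqueness of the downstream flow keeps them together, so the bound persists. The main subtlety is the first variation step, which must be valid for any choice of minimizing geodesic between $\alpha(t)$ and $\beta(t)$ (no uniqueness is assumed); this is a standard fact in Alexandrov geometry and is the only place where the non-smoothness of $A$ enters in a nontrivial way.
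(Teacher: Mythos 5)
The paper does not prove this lemma at all --- it is quoted verbatim as Lemma~2.1.4 of the cited reference \cite{Pet2007} and used as a black box. Your argument is correct and is essentially the standard proof from that reference: the first-variation inequality for $\ell(t)=\dis(\alpha(t),\beta(t))$, the bound $\langle\nabla_xF,\uparrow_x^y\rangle\ge\frac{F(y)-F(x)}{|xy|}-\frac{\lambda}{2}|xy|$ coming from $\lambda$-concavity along the connecting geodesic together with the defining property of the gradient, cancellation of the $F$-terms upon adding the two endpoint estimates, and Gronwall. The only point deserving care --- that the first-variation upper bound on the right derivative of distance holds for an arbitrary choice of shortest geodesic --- is exactly the one you flag, and it is indeed standard.
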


The following two theorems are essential for our estimates of the volume upper bound.
\begin{theorem}[Gradient curves in non-negatively curved spaces]\label{thm:gcurve:0}
For $X\in \alex^{n}(0)$ with $BA(\partial X, X)\ge 1$. Let $T\in (0, a)$, then the Sharafutdinov flow satisfies the following estimate 
$$\dis(\Psi^{T}(p), \Psi^{T}(q))\le (1-T) \dis(p, q), $$ 
for any $p, q\in G(0)$.
\end{theorem}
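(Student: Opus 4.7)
The plan is to exploit the $-1$-concavity of the auxiliary function $F=f(\rho)=-(1-\rho)^{2}/2$ supplied by \autoref{prop:AB}(1), combined with the contraction estimate \autoref{lem:pet} and the short-map property of the Sharafutdinov retraction from \autoref{prop:PerlSh}. The key observation is that the Sharafutdinov flow $\Psi^{t}$ and the $F$-gradient flow $\alpha_{p}(\cdot)$ are two different reparametrizations of one and the same $\rho$-gradient trajectory. This allows me to first run the $F$-gradient flow for a carefully chosen \emph{uniform} time in order to extract the sharp contraction factor $(1-T)$, and then push the resulting points onto $G(T)$ by a $1$-Lipschitz retraction without any further loss of distance.

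Concretely, for $p,q\in G(0)=\partial X$, let $\alpha_{p},\alpha_{q}$ denote the $F$-gradient curves with $\alpha_{p}(0)=p$ and $\alpha_{q}(0)=q$. Since $F$ is $-1$-concave by \autoref{prop:AB}(1), \autoref{lem:pet} yields
$$
\dis(\alpha_{p}(s),\alpha_{q}(s))\le e^{-s}\dis(p,q)\qquad\text{for all }s\ge 0.
$$
I would set $s:=\log\frac{1}{1-T}$, so that $e^{-s}=1-T$. The role of \autoref{lem:time} is then to guarantee that at this time both $F$-gradient curves still lie in the sublevel set $\rho^{-1}[0,T]$. Indeed, if $w\in\partial X$ is any footpoint of a point in $G(T)$, then along $\alpha_{w}$ one has $|\nabla\rho|\equiv 1$, so $h_{w}(t)=\rho(\alpha_{w}(t))$ satisfies $h_{w}'=f'(h_{w})=1-h_{w}$ with $h_{w}(0)=0$, giving $h_{w}(t)=1-e^{-t}$ and hence $T_{w}(T)=\log\frac{1}{1-T}=s$. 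For arbitrary $p\in G(0)$, \autoref{lem:time} then forces $T_{p}(T)\ge T_{w}(T)=s$, so $\rho(\alpha_{p}(s))\le T$.

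It remains to transfer this $F$-gradient estimate into the claimed bound for $\Psi^{T}$. Because the $F$-gradient curve through $p$ and the Sharafutdinov flow line through $p$ trace the same underlying $\rho$-gradient trajectory, the points $\alpha_{p}(s)$ and $\Psi^{T}(p)$ lie on one common curve, and $\Psi^{T}(p)\in G(T)$ is precisely where that curve first meets $G(T)$. Consequently $sh^{T}(\alpha_{p}(s))=\Psi^{T}(p)$, and similarly for $q$. Since $sh^{T}$ is a short map by \autoref{prop:PerlSh}, one concludes
$$
\dis(\Psi^{T}(p),\Psi^{T}(q))=\dis(sh^{T}(\alpha_{p}(s)),sh^{T}(\alpha_{q}(s)))\le \dis(\alpha_{p}(s),\alpha_{q}(s))\le (1-T)\dis(p,q),
$$
which is the desired inequality.

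The only real subtlety I foresee is keeping the three parametrizations of the common trajectory -- the $\rho$-, the $F$-, and the Sharafutdinov gradient flows -- carefully distinguished, and verifying that at the uniform time $s=\log\frac{1}{1-T}$ the $F$-gradient curves have not yet overshot $G(T)$. Once that is established, the Sharafutdinov retraction is being used only to complete the journey to $G(T)$, so it introduces no additional distortion beyond the factor $1-T$ already produced by \autoref{lem:pet}.
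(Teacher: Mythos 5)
Your proposal is correct and follows essentially the same route as the paper: use the $-1$-concavity of $F=-(1-\rho)^2/2$ together with \autoref{lem:pet} at the uniform time $s=-\ln(1-T)$ (identified via \autoref{lem:time} as the minimal, footpoint-realized flow time to $G(T)$), and then finish by applying the $1$-Lipschitz Sharafutdinov retraction to land on $G(T)$. The only difference is organizational: the paper first treats the special case where $p,q$ are footpoints and then the general case, whereas you handle the general case directly.
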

\begin{proof}
By \autoref{prop:AB}, $F=f(\rho)$ is $-1$-concave. As above, let $\alpha_{p}$ be the gradient curve of $F=f(\rho)$ starting from $p\in G(0)$. Let $t_{p}, t_{q}\in \RR$ be the times such that
$$
\bar{p}:=\alpha_{p}(t_{p})=\Psi^{T}(p)\in G(T),\ \ \bar{q}:=\alpha_{q}(t_{q})=\Psi^{T}(q)\in G(T).
$$
Therefore, we need to estimate the flow times $t_{p}, t_{q}$ and then estimate $\dis(\alpha_{p}(t_{p}),\alpha_{q}(t_{q}))$. 

Let us firstly consider a special case: $p$ (resp. $q$) is a footpoint of $\bar{p}$(resp. $\bar{q}$). Since $\rho$ and $F$ share the same gradient curves(as a set), the $\alpha_{p}$ and $\alpha_{q}$ are nothing but re-parametrized geodesic from $\bar{p}$ to $p$ and $\bar{q}$ to $q$ with length equals to $T$. Therefore $|\nabla\rho|_{\alpha_{p}(t)}=1$ for $0\le t\le t_{p}$ and $|\nabla\rho|_{\alpha_{q}(t)}=1$ for $0\le s\le t_{q}$. In such a case, let $h_{p}(t)=\rho(\alpha(t))$, we have
$$
h_{p}'(t)=f'(h_{p}(t))(|\nabla\rho|_{\alpha_{p}(t)})^{2}=1-h_{p}(t).
$$
Combining with the initial condition $h_{p}(0)=0$, we get:
$$
t_{p}=-\ln(1-T).
$$
Similarly, we have $t_{q}=-\ln(1-T)$. Therefore, if both $p$ and $q$ are footpoints, it follows from \autoref{lem:pet} that:
\begin{align*}
\dis(\bar p, \bar q)&=\dis(\Phi_F^{-\ln(1-t_0)}(p), \Phi_F^{-\ln(1-t_0)}(p))\\
&\le e^{\ln(1-T)}\dis(p, q)\\
&\le (1-T)\dis(p, q).
\end{align*}
For other cases, for example $p$ is not a footpoint of $\bar p$, then by \autoref{lem:time}, we have $t_{p}>-\ln(1-T)$. Then let $\tilde p=\Phi_f^{-\ln(1-T)}(p)$ and  $\tilde q=\Phi_f^{-\ln(1-T)}(q)$. Therefore by \autoref{lem:pet}, we still have
$$
\dis(\tilde p, \tilde q)\le (1-T) \dis(p, q).
$$
On the other hand the Sharafutdinov retraction $sh^{T}$ retracts $X$ onto $\Omega(T)$ along the gradient curve of $\rho$. Therefore $sh^{T}(\tilde p)=\bar p$ and $sh^{T}(\tilde q)=\bar q$. Hence by \autoref{prop:PerlSh}, we have
$$
\dis(\bar p, \bar q)\le \dis(\tilde p, \tilde q) \le (1-T)\dis(p, q).
$$
This finishes the proof.
\end{proof}

\begin{theorem}[Gradient curves in positively curved spaces]\label{thm:gcurve:1}
For $X\in \alex^{n}(1)$, and $T\in (0, a)$. Then the Sharafutdinov flow satisfies the following estimate $$\dis(\Psi^{T}(p), \Psi^{T}(q))\le \cos(T) \dis(p, q), $$ for any $p, q\in G(0)$.
\end{theorem}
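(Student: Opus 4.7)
The plan is to parametrize the Sharafutdinov flow by its level value $u \in [0,T]$ and to derive the required distance contraction directly as a pointwise differential inequality, sidestepping the footpoint/non-footpoint case split used in the proof of \autoref{thm:gcurve:0}. Set $c_p(u) := \Psi^u(p)$, $c_q(u) := \Psi^u(q)$, and $L(u) := \dis(c_p(u), c_q(u))$; by construction $\rho(c_p(u)) = \rho(c_q(u)) = u$. The target inequality is
\[
L'(u) \le -\tan(u)\, L(u), \qquad u \in (0, T],
\]
which integrates to $L(T) \le \cos(T) L(0)$, proving the theorem.

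The argument rests on three ingredients. First, I would observe that $\Omega^u$ is geodesically convex: by \autoref{prop:AB}(2), $F = \sin(\rho)$ is $-\sin(\rho)$-concave, hence plainly concave; since the inradius is at most $\pi/2$ by \cite{MR2595678}, $\rho$ takes values in $[0,\pi/2]$ on which $\sin$ is monotone, so $\Omega^u = \{F \ge \sin(u)\}$ is a super-level set of a concave function. Consequently any minimizing geodesic $\gamma$ from $c_p(u)$ to $c_q(u)$ satisfies $\gamma \subset \Omega^u$, so that $m(u) := \max_\gamma \rho \ge u$. Second, since $\gamma \subset \Omega_{m(u)}$, $F$ is $-\sin(m(u))$-concave along $\gamma$; applied at $c_p(u)$ with unit direction $\xi^p_q$ toward $c_q(u)$, using $F(c_p) = F(c_q) = \sin(u)$ and $\nabla F|_{c_p} = \cos(u) \nabla\rho|_{c_p}$, one obtains
\[
0 = F(c_q) - F(c_p) \le \cos(u) \langle \nabla \rho|_{c_p}, \xi^p_q \rangle L(u) - \frac{\sin(m(u))}{2} L(u)^2,
\]
and $\sin(m(u)) \ge \sin(u)$ yields $\langle \nabla \rho|_{c_p}, \xi^p_q \rangle \ge \tfrac{\tan(u)}{2} L(u)$, symmetrically at $c_q(u)$. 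Third, the forward tangent of Sharafutdinov is $c_p^+(u) = \nabla\rho|_{c_p}/|\nabla\rho|_{c_p}^2$, so the Alexandrov first-variation inequality combined with $|\nabla \rho| \le 1$ gives
\[
L'(u) \le -\frac{\langle \nabla \rho|_{c_p}, \xi^p_q \rangle}{|\nabla \rho|_{c_p}^2} - \frac{\langle \nabla \rho|_{c_q}, \xi^q_p \rangle}{|\nabla \rho|_{c_q}^2} \le -\tan(u) L(u).
\]

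I expect the main obstacle to be the angle bound in the middle step. The comparison geodesic $\gamma$ may rise strictly above level $u$, so the correct concavity modulus on $\gamma$ is $-\sin(m(u))$ rather than $-\sin(u)$; monotonicity of $\sin$ on $[0,\pi/2]$ together with the inradius bound is precisely what makes this close, since $\sin(m(u)) \ge \sin(u)$ pushes in the favorable direction. A minor secondary matter is justifying the first variation and the $\phi$-concavity inequality at points where $|\nabla\rho|<1$ (non-footpoints), but the factor $1/|\nabla\rho|^2 \ge 1$ only strengthens the bound, so the remaining details follow from the standard first-variation formula in \cite{Pet2007}.
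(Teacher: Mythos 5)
Your argument is correct in substance but takes a genuinely different route from the paper. The paper discretizes $[0,T]$ into levels $x_i=iT/N$, applies Petrunin's contraction estimate (\autoref{lem:pet}) as a black box on each slab where $F=\sin(\rho)$ is $-\sin(x_{i-1})$-concave, controls the mismatched flow times of $\alpha_p$ and $\alpha_q$ via \autoref{lem:time} and an auxiliary footpoint curve $\alpha_w$, cleans up with the short retraction of \autoref{prop:PerlSh}, and passes to the limit $N\to\infty$ to recover $\ln\cos(T)$ as a Riemann integral. You instead unroll the proof of \autoref{lem:pet} infinitesimally for the level-parametrized flow: the identity $F(c_p(u))=F(c_q(u))=\sin(u)$, combined with the concavity of $F$ along the comparison geodesic and $\nabla F=\cos(u)\nabla\rho$, forces the quantitative angle bound $\langle\nabla\rho,\xi^p_q\rangle\ge\tfrac{\tan(u)}{2}L(u)$, and the first-variation inequality then gives $L'(u)\le-\tan(u)L(u)$ directly. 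This buys a cleaner treatment of non-footpoints --- the factor $1/|\nabla\rho|^2\ge 1$ absorbs exactly what the paper handles with \autoref{lem:time} and \autoref{prop:PerlSh} --- and removes the discretization entirely; the price is that you must justify the first variation for two moving endpoints and the integration of an upper forward derivative yourself rather than citing \autoref{lem:pet}.

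One step needs repair. The concavity modulus of $F$ along $\gamma$ is governed by the \emph{minimum} of $\rho$ on $\gamma$, not the maximum: a more negative modulus is a \emph{stronger} property, and the fact that $\gamma$ rises above level $u$ does not grant it. So the displayed inequality containing $\sin(m(u))$ with $m(u)=\max_\gamma\rho$ is not justified as written, and your stated expectation that ``the correct concavity modulus on $\gamma$ is $-\sin(m(u))$'' has the logic inverted. What is true is that $\gamma\subset\Omega^u$ (by your convexity observation) with endpoints on $G(u)$, hence $\min_\gamma\rho=u$ and $F$ is $-\sin(u)$-concave along $\gamma$. Since your subsequent deduction only uses $\sin(m(u))\ge\sin(u)$, replacing $\sin(m(u))$ by $\sin(u)$ throughout repairs the step with no loss, and the rest of the proof goes through.
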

\begin{proof}
For any $N\in \NN$, we set $x_{i}:=iT/N$ for $i=0, \cdots, N$. For any $p, q\in G(0)$, let $\alpha_{p}$ and $\alpha_{q}$ be the gradient of $F=f(\rho)$-gradient curve as before, where $f=\sin(x)$. Pick any point $\bar{w}\in G(T)$ and let $w\in G(0)$ be a footpoint of $\bar{w}$. Let $\alpha_{w}$ be the gradient flow of $F$ starting from $w$ to $\bar{w}$. We will use the gradient curve $\alpha_{w}$ as an auxiliary tool in order to keep tracking the flow times of the gradient curves $\alpha_{p}$ and $\alpha_{q}$. 

By \autoref{prop:AB}, $F$ is $-\sin(\rho)$-concave in $\Omega_{0}$. Moreover, $F$ is $-\sin(x_{i-1})$-concave in $\Omega_{x_{i-1}}$, for $i=1, \cdots, N$. Let $\{t_{p,i}\}, \{t_{q,i}\}$ and $\{s_{i}\}$ be the set of numbers such that
$$
p_{i}:=\alpha_{p}(t_{p, i})\in G(x_{i}),\ \ q_{i}:=\alpha_{q}(t_{q, i})\in G(x_{i}),\ \ w_{i}:=\alpha_{w}(s_{i})\in G(x_{i}).
$$
Let $$\Delta_{i-1}=\min\{(t_{p,i}-t_{p, i-1}), (t_{q, i}-t_{q, i-1})\}.$$
By \autoref{lem:time},
\begin{equation}\label{eq:1:good}
s_{i}-s_{i-1}\le \Delta_{i-1}.
\end{equation}
Since $F$ is $-\sin(x_{i-1})$-concave in $\Omega^{x_{i-1}}$, we can apply \autoref{lem:pet} to the flow of $F$ for time $\Delta_{i-1}$, in the domain $\Omega_{x_{i}}\setminus \Omega_{x_{i-1}}$. i.e. for $i=1, \cdots, N$, we have the following estimate:
\begin{align*}
\dis(p_{i}, q_{i})&\le \dis(\Phi_{F}^{\Delta_{i-1}}(p_{i-1}), \Phi_{F}^{\Delta_{i-1}}(q_{i-1}))\\
&\le e^{-\sin(x_{i-1})\Delta_{i-1}}\dis(p_{i-1}, q_{i-1})\\
&\le e^{-\sin(x_{i-1})(s_{i}-s_{i-1})}\dis(p_{i-1}, q_{i-1})
\end{align*}
where we apply \autoref{prop:PerlSh} to get the first inequality. Last inequality is due to $-\sin(x)|_{[0, \pi/2]}\le 0$ and \eqref{eq:1:good}. Summing up for $i=1, \cdots, N$, we have
\begin{equation}\label{eq:1:dis}
\dis(p_{N}, q_{N})\le e^{-\sum_{i=1}^{N}\sin(x_{i})(s_{i}-s_{i-1})}\dis(p, q).
\end{equation}
Let $h(s)=\rho(\alpha_{w}(s))$. Since $\alpha_{w}$ coincide with the shortest geodesic from $w$ to $\bar{w}$, we know $|\nabla \rho|_{\alpha_{w}(s)}=1$. Therefore
\begin{equation}\label{eq:cos}
h'(s)=f'(h(s))=\cos(h(s)).
\end{equation}
Letting $N\to \infty$:
\begin{align*}
\lim_{N\to \infty}-\sum_{i=1}^{N}\sin(x_{i-1})(s_{i}-s_{i-1})&=\lim_{N\to \infty}-\sum_{i=1}^{N}\sin(h(s_{i-1}))(s_{i}-s_{i-1})\\
&=-\int_{0}^{s_{N}}\sin(h(s))ds\\
&=-\int_{0}^{T}\frac{\sin(h)}{\cos(h)}dh\\
&=\ln(\cos(T)).
\end{align*}
where we use $dh=\cos(h)ds$ derived from \eqref{eq:cos}. Therefore, talking limit as $N\to \infty$ in \eqref{eq:1:dis}, we have
$$
\dis(\Psi^{T}(p), \Psi^{T}(q))\le \cos(T) \dis(p, q).
$$
This finishes the proof.
\end{proof}

\begin{remark}
Our estimates of the gradient curves above can be proved for $X\in \alex^{n}(\kappa)$ with $A$-convex boundary, for general $\kappa \ge 0$ and $A>0$, either by rescaling or by invoking \autoref{prop:RCT}.
\end{remark}

\section{Proof of Volume Upper Bounds}

By the co-area formula applied to the distance function $\rho$, we have
$$
\vol_{n}(X)=\int_0^a A(t) dt,
$$
where $A(t)=\vol_{n-1}(G(t))$, the $(n-1)$-dimensional Hausdorff measure of the level set $G(t):=\rho^{-1}(t)$. In the model space $\C_0^1(\partial X)$ (resp. $\C_{1}^{\pi/2}$) we denote the level set of the distance function $\rho=\dis(\partial X ,\cdot)$ in $\C_0^1(\partial X)$ (resp. $\C_{1}^{\pi/2}$) by $G^*(t)$. We set $A^*(t)=\vol_{n-1}(G^*(t))$. Since $a\le 1$ (resp. $a\le \pi/2$) and 
$$
\vol_{n}(\C_0^1(\partial X))=\int_0^1 A^*(t)dt,
$$
resp.
$$
\vol_{n}(\C_1^{\pi/2}(\partial X))=\int_0^{\pi/2} A^*(t)dt,
$$
therefore to prove the volume estimates from above,  it suffices to show 
\begin{equation}\label{eq:area}
A(t)\le A^*(t).
\end{equation}
The following proposition is clear from the definition of Hausdorff measure:
\begin{prop}\label{prop:haus}
Let $X$ and $Y$ be two metric spaces and $f:X\to Y$ be a Lipschitz map with Lipschitz constant $L$, then
$$\mu_{d}(f(X))\le L^{d}\mu_{d}(X).$$
\end{prop}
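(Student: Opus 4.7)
The plan is to read off the inequality directly from the definition of the $d$-dimensional Hausdorff measure, since this proposition is simply the well-known scaling behavior of Hausdorff measure under Lipschitz maps. Recall that $\mu_d(X) = \lim_{\delta\to 0^+}\mu_d^{\delta}(X)$, where
$$
\mu_d^{\delta}(X)\;=\;c_d\cdot\inf\Bigl\{\sum_i (\diam U_i)^d\,:\, X\subset \bigcup_i U_i,\ \diam U_i\le \delta\Bigr\},
$$
with $c_d$ the usual normalizing constant.

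First I would observe that since $f$ is $L$-Lipschitz, any subset $U\subset X$ satisfies $\diam f(U)\le L\,\diam U$. Consequently, given any $\delta$-cover $\{U_i\}$ of $X$, the family $\{f(U_i)\}$ is an $(L\delta)$-cover of $f(X)$, and
$$
\sum_i (\diam f(U_i))^d \;\le\; L^d\sum_i (\diam U_i)^d.
$$
Taking the infimum over all $\delta$-covers of $X$ yields $\mu_d^{L\delta}(f(X))\le L^d\,\mu_d^\delta(X)$. Since $L$ is fixed, sending $\delta\to 0^+$ also sends $L\delta\to 0^+$, and in the limit one obtains $\mu_d(f(X))\le L^d\,\mu_d(X)$.

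There is no serious obstacle here: the proposition is a textbook fact about Hausdorff measures and is recorded only so that it can be combined with the gradient-flow Lipschitz estimates of \autoref{thm:gcurve:0} and \autoref{thm:gcurve:1} to derive the pointwise area inequality \eqref{eq:area}. In that later application the Lipschitz map $f$ will be (a restriction of) the Sharafutdinov flow $\Psi^{t}:G(0)\to G(t)$, with Lipschitz constant $1-t$ when $\kappa=0$ and $\cos t$ when $\kappa=1$; then $\mu_{n-1}(G(t))\le (1-t)^{n-1}\mu_{n-1}(G(0))$ (resp.\ $\cos^{n-1}(t)\,\mu_{n-1}(G(0))$), matching $A^{*}(t)$ in the warped-product model.
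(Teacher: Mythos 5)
Your proof is correct and is exactly the standard argument the paper has in mind: the paper offers no written proof, stating only that the proposition ``is clear from the definition of Hausdorff measure,'' and your covering argument (push forward a $\delta$-cover, scale diameters by $L$, take infima and let $\delta\to 0^{+}$) is the canonical way to make that explicit. Your closing remark about how the proposition is applied with $f=\Psi^{t}$ also matches the paper's use in the Corollary following \autoref{prop:haus}.
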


Cleary $A(0)=A^*(0)$ and $A^*(t)=(1-t)^{n-1}A^*(0)$ (resp. $A^*(t)=\cos^{n-1}(t)A^*(0)$). By \autoref{thm:gcurve:0} , \autoref{thm:gcurve:1} and \autoref{prop:haus} we have the following, which proves \autoref{thm:-1}.
\begin{cor}
For $0\le T\le a$, the $(n-1)$-dimensional Hausdorff measure of $G(T)$ satisfies
$$
A(T)=\mu_{n-1}(G(T))\le (1-T)^{n-1}A(0)=A^*(T),\ \  {\rm if}\ \ \kappa=0;
$$
$$
A(T)=\mu_{n-1}(G(T))\le \cos^{n-1}(T)A(0)=A^*(T),\ \  {\rm if}\ \ \kappa=1;
$$
\end{cor}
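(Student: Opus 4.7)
The plan is to combine the Sharafutdinov flow estimates (\autoref{thm:gcurve:0} and \autoref{thm:gcurve:1}) with the basic Hausdorff-measure inequality for Lipschitz maps (\autoref{prop:haus}). The point is that the Sharafutdinov flow $\Psi^T$ sends $G(0)=\partial X$ onto $G(T)$ surjectively (this is exactly the property $\Psi^T(G(s))=G(s+t)$ recorded after the definition of $\Psi$), so $G(T)$ is the Lipschitz image of $\partial X$ under a map whose Lipschitz constant we have already controlled.

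First I would invoke \autoref{thm:gcurve:0} (in the case $\kappa=0$) to conclude that the restriction $\Psi^T\big|_{G(0)} : G(0) \to G(T)$ is Lipschitz with constant $L = 1-T$; in the case $\kappa=1$ I would instead invoke \autoref{thm:gcurve:1} to get $L = \cos(T)$. Second, since $\Psi^T(G(0)) = G(T)$, \autoref{prop:haus} applied with $d = n-1$ gives
$$
A(T) \;=\; \mu_{n-1}(G(T)) \;=\; \mu_{n-1}\bigl(\Psi^T(G(0))\bigr) \;\le\; L^{n-1}\,\mu_{n-1}(G(0)) \;=\; L^{n-1} A(0).
$$
Substituting $L=1-T$ or $L=\cos(T)$ yields the two claimed inequalities, and identifying the right-hand side with $A^{*}(T)$ (as computed just before the corollary, using that the model cones have level-set areas $A^{*}(t) = (1-t)^{n-1}A^{*}(0)$ or $\cos^{n-1}(t)A^{*}(0)$) closes the argument.

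There is essentially no obstacle beyond what has already been done: the substantive analytic work, namely the contraction property of $\Psi^T$, is packaged in \autoref{thm:gcurve:0} and \autoref{thm:gcurve:1}. The only point that deserves a sentence of care is the surjectivity $\Psi^T(G(0)) = G(T)$, which one checks by noting that any $\bar w \in G(T)$ has a footpoint $w \in G(0)$, and the $F$-gradient curve emanating from $w$ (up to reparametrization, the Sharafutdinov curve) coincides with the distance-realizing geodesic from $w$ to $\bar w$, hence reaches $\bar w$. Everything else is a one-line composition of the two ingredients, which is why the corollary follows so rapidly once \autoref{thm:gcurve:0}, \autoref{thm:gcurve:1}, and \autoref{prop:haus} are in hand.
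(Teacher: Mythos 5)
Your argument is exactly the paper's: the corollary is obtained by viewing $G(T)$ as the image of $G(0)$ under the Sharafutdinov flow $\Psi^T$, whose Lipschitz constant $1-T$ (resp. $\cos(T)$) is supplied by \autoref{thm:gcurve:0} (resp. \autoref{thm:gcurve:1}), and then applying \autoref{prop:haus} with $d=n-1$. Your extra sentence justifying the surjectivity $\Psi^T(G(0))=G(T)$ is a welcome point of care that the paper leaves implicit.
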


\begin{proof}[Discussion of equalities in \autoref{thm:-1}]
If the equality of the volume estimate in $X$ holds in \autoref{thm:-1} for some $r>0$, then the time estimate in \autoref{lem:time} implies that every point $p\in \partial X$ is a foot point for some $\bar p\in G(r)$. Therefore, every point in $\Omega_{r}$ lies in some geodesic from $G(0)$ to $G(r)$. Moreover the equality in \autoref{thm:gcurve:0}  holds, which implies that for every pair of geodesics $p\bar p$ and $q\bar q$ from $\partial X$ to $G(r)$ span a totally geodesic flat surface in $X$, which gives the desired warped product structure on $\Omega_{r}=B(\partial X, r)$. The discussion on $Y$ is similar.
\end{proof}

\section{Estimate on the convexity of the level sets}
In this section, we estimate the convexity of the level set of $\rho$, which can be viewed as Hessian comparison theorem in the smooth setting. 
\begin{proof}[Proof of \autoref{prop:RCT}]
For any point $p\in G(t)$, we want to estimate the convexity of $G(t)$ in the Base-Angle sense. First, by the definition, we can assume $p$ is a foot point, hence by the first variation:
$$
\Sigma_{p}(X)=\SS(\partial \Sigma_{p}(\Omega^{t})),
$$
where $\SS(Y)$ denotes the spherical suspension of the space $Y$. The suspension points $N, S\in \Sigma_{p}(X)$ can be viewed as follows: we can identify $N$ with the inner pointing normal vector and identify $S$ as the $\uparrow_{p}^{q}$, where $q\in \partial X$ such that $\dis(p, q)=\rho(p)$. For any $s>0$ small, let $x\in G(t)$ with $\dis(p, x)=s$. We want to estimate 
$$\theta=\theta(s):=\dis(\Uparrow_{p}^{x}, \partial \Sigma_{p}(\Omega^{t})).$$ Clearly $\rho(x)=\rho(p)=t$. Let $\xi\in \partial \Sigma_{p}(\Omega^{t})$ such that
$$
\dis(\Uparrow_{p}^{x}, \partial \Sigma_{p}(\Omega^{t}))=\dis(\Uparrow_{p}^{x}, \xi).
$$
Let $\gamma_{\xi}$ be a quasi-geodesic in $X$ with initial velocity $\xi$. Then by the triangle comparison theorem applied to the triangle $\overline{px}$, $\overline{x\gamma_{\xi}(s)}$, $\gamma_{\xi}[0, s]$, we have
\begin{equation}\label{eq:ricaati00}
l(s):=\dis(x, \gamma_{\xi}(s))\le \sqrt{2s^{2}-2s^{2}\cos(\theta)}.
\end{equation}
Note that, we use the Euclidean comparison triangle for both cases in order to simply the calculation. One could have used the spherical comparison for $\curv\ge 1$, but it does not gain any extra information.
Let $M_{i}(t)$ be the modulus of concavity of the function $\rho$ at the level set $G(t)$, for the case $\curv(X)\ge i$, $i=0, 1$. An easy calculation from \autoref{prop:AB} gives
$$
M_{0}=\frac{-1}{1-t},\ \ \ M_{1}=-\tan(t).
$$
Set $h(s)=\rho(\gamma_{\xi}(s))$. Thus $h(0)=t$, $h'(0)=0$ and $h''(0)\le M_{i}(t), i=1,2.$
Therefore, for $s$ small, we have:
\begin{equation}\label{eq:ricaati01}
h(s)\le t+\frac{M_{i}}{2}s^{2}+o(s^{2}), \ \ \ i=1,2.
\end{equation}
Combining \eqref{eq:ricaati00} and \eqref{eq:ricaati01}, we can estimate $\dis(x, \partial X)$
\begin{eqnarray*}
t=\rho(x)&\le&l(s)+h(s)\\
&\le& \sqrt{2s^{2}-2s^{2}\cos(\theta)}+t+\frac{M_{i}}{2}s^{2}+o(s^{2}).
\end{eqnarray*}
that is
\begin{eqnarray*}
\theta&\ge&\arccos\left(1-\left(\frac{1}{8}M_{i}^{2}s^{2}+o(s^{2})\right)\right)\\
&=&\sqrt{2}\sqrt{\frac{1}{8}M_{i}^{2}s^{2}+o(s^{2})}+o(s)\\
&=&\frac{s}{2}M_{i}+o(s),
\end{eqnarray*}
where in the first equality we use the Taylor expansion of $\arccos(1-x)$ in terms of $\sqrt{x}$. Therefore, the conclusion follows by the definition of Base-Angle.
\end{proof}

\bibliographystyle{alpha}
\bibliography{mybib}
\end{document}